\DeclareMathAlphabet{\mathscr}{T1}{pzc}{m}{it} 
\titleformat{\section}[block]{\scshape\filcenter\Large}{\thesection.}{.5em}{}
\titleformat{\subsection}[block]{\bfseries\filcenter\large}{\thesubsection.}{.5em}{\medskip}
\titleformat{\subsubsection}[runin]{\bfseries}{\thesubsubsection.}{.5em}{}[.]
\titlespacing{\subsubsection}{0pt}{10pt}{.5em}
\newtheoremstyle{ntheorem}%
	{\topsep}{\topsep}{\itshape}{0pt}{\bfseries}{.}{.5em}%
	{\thmnumber{#2.\hspace{.5em}}\thmname{#1}\thmnote{ (#3)}}
\newtheoremstyle{ndefinition}%
	{\topsep}{\topsep}{\normalfont}{0pt}{\bfseries}{.}{.5em}%
	{\thmnumber{#2.\hspace{.5em}}\thmname{#1}\thmnote{ (#3)}}
\theoremstyle{ntheorem}
  	\newtheorem{proposition}[subsubsection]{Proposition}
\theoremstyle{ndefinition}
	\edef\Drop@@{%
		\dimen@=#1\relax
		\dimen@=.5\dimen@
		\A@=-\sinDirection\dimen@
		\B@=\cosDirection\dimen@
		\setboxz@h{%
			\setbox2=\hbox{\kern3\A@\raise3\B@\copy\z@}%
			\dp2=\z@ \ht2=\z@ \wd2=\z@ \box2
			\setbox2=\hbox{\kern\A@\raise\B@\copy\z@}%
			\dp2=\z@ \ht2=\z@ \wd2=\z@ \box2
			\setbox2=\hbox{\kern-\A@\raise-\B@\copy\z@}%
			\dp2=\z@ \ht2=\z@ \wd2=\z@ \box2
			\setbox2=\hbox{\kern-3\A@\raise-3\B@ \noexpand\boxz@}%
			\dp2=\z@ \ht2=\z@ \wd2=\z@ \box2
		}%
		\ht\z@=\z@ \dp\z@=\z@ \wd\z@=\z@ \noexpand\styledboxz@
	}%
\xydef@\Tttip@{\kern2pt \vrule height2pt depth2pt width\z@
	\Tttip@@ \kern2pt \egroup
	\U@c=0pt \D@c=0pt \L@c=0pt \R@c=0pt \Edge@c={\circleEdge}%
	\def\Leftness@{.5}\def\Upness@{.5}%
	\def\Drop@@{\styledboxz@}\def\Connect@@{\straight@{\dottedSpread@\jot}}}
\xydef@\Tttip@@{%
	\dimen@=.25\dimen@
 	\B@=\cosDirection\dimen@
	\setboxz@h\bgroup\reverseDirection@\line@ \wdz@=\z@ \ht\z@=\z@ \dp\z@=\z@
	{\vDirection@(1,-1)\xydashl@ \xyatipfont\char\DirectionChar}%
	{\vDirection@(1,+1)\xydashl@ \xybtipfont\char\DirectionChar}%
}
\xydef@\ar@form{
	\ifx \space@\next \expandafter\DN@\space{\xyFN@\ar@form}%
	\else\ifx ^\next \DN@ ^{\xyFN@\ar@style}\edef\arvariant@@{\string^}%
	\else\ifx _\next \DN@ _{\xyFN@\ar@style}\edef\arvariant@@{\string_}%
	\else\ifx 0\next \DN@ 0{\xyFN@\ar@style}\def\arvariant@@{0}%
	\else\ifx 1\next \DN@ 1{\xyFN@\ar@style}\def\arvariant@@{1}%
	\else\ifx 2\next \DN@ 2{\xyFN@\ar@style}\def\arvariant@@{2}%
	\else\ifx 3\next \DN@ 3{\xyFN@\ar@style}\def\arvariant@@{3}%
	\else\ifx 4\next \DN@ 4{\xyFN@\ar@style}\def\arvariant@@{4}%
	\else\ifx \bgroup\next \let\next@=\ar@style
	\else\ifx [\next \DN@[##1]{\ar@modifiers{[##1]}}
	\else\ifx *\next \DN@ *{\ar@modifiers}%
	\else\addLT@\ifx\next \let\next@=\ar@slide
	\else\ifx /\next \let\next@=\ar@curveslash
	\else\ifx (\next \let\next@=\ar@curveinout 
	\else\addRQ@\ifx\next \addRQ@\DN@{\ar@curve@}%
	\else\addLQ@\ifx\next \addLQ@\DN@{\xyFN@\ar@curve}%
	\else\addDASH@\ifx\next \addDASH@\DN@{\defarstem@-\xyFN@\ar@}%
	\else\addEQ@\ifx\next \addEQ@\DN@{\def\arvariant@@{2}\defarstem@-\xyFN@\ar@}%
	\else\addDOT@\ifx\next \addDOT@\DN@{\defarstem@.\xyFN@\ar@}%
	\else\ifx :\next \DN@:{\def\arvariant@@{2}\defarstem@.\xyFN@\ar@}%
	\else\ifx ~\next \DN@~{\defarstem@~\xyFN@\ar@}%
	\else\ifx !\next \DN@!{\dasharstem@\xyFN@\ar@}%
	\else\ifx ?\next \DN@?{\ar@upsidedown\xyFN@\ar@}%
	\else \let\next@=\ar@error
	\fi\fi\fi\fi\fi\fi\fi\fi\fi\fi\fi\fi\fi\fi\fi\fi\fi\fi\fi\fi\fi\fi\fi \next@}
\newcommand{\fl}{\to}
\newcommand{\dfl}{\Rightarrow}
\newcommand{\qfl}{\xymatrix@1@C=10pt{\ar@4 [r] &}}
\renewcommand{\phi}{\varphi}
\renewcommand{\epsilon}{\varepsilon}
\definecolor{orange}{rgb}{1,0.55,0}
\definecolor{vert}{rgb}{0,0.45,0}
\newcommand{\ifthen}[2]{\ifthenelse{#1}{#2}{}}
\renewcommand{\fl}{\rightarrow}
\newcommand{\ofl}[1]{\overset{#1}{\fl}}
\newcommand{\odfl}[1]{\overset{#1}{\dfl}}
\newcommand{\kar}[1]{\mathrm{Kar}(#1)}
\begin{document}

\quad

\vspace{-2cm}

\begin{center}
\begin{Large}
\textsc{Linear polygraphs applied to categorification}
\end{Large}

\vskip+5pt

\textbf{Cl\'ement Alleaume}

Univ Lyon, Université Claude Bernard Lyon 1, CNRS UMR 5208,

Institut Camille Jordan, 43 blvd. du 11 novembre 1918, F-69622 Villeurbanne cedex, France \textsf{clement.alleaume@univ-st-etienne.fr}
\end{center}

\begin{small}\begin{minipage}{14cm}
\noindent\textbf{Abstract --} 
We introduce two applications of polygraphs to categorification problems. We compute first, from a coherent presentation of an $n$-category, a coherent presentation of its Karoubi envelope. For this, we extend the construction of Karoubi envelope to $n$-polygraphs and linear $(n,n-1)$-polygraphs. The second problem treated in this paper is the construction of Grothendieck decategorifications for $(n,n-1)$-polygraphs. This construction yields a rewriting system presenting for example algebras categorified by a linear monoidal category. We finally link quasi-convergence of such rewriting systems to the uniqueness of direct sum decompositions for linear $(n-1,n-1)$-categories.

\end{minipage}\end{small}


\section{Introduction}

Karoubi envelopes of categories were introduced as a way to classify the idempotents of a category \cite{Bunge79}. The Karoubi envelope of a category $\mathcal{C}$ is an explicitly defined completion $\kar{\mathcal{C}}$ of $\mathcal{C}$ splitting all idempotents. In particular, if all idempotents of the category $\mathcal{C}$ are split, the category $\kar{\mathcal{C}}$ is equivalent to $\mathcal{C}$.  In this work, we focus on the presentations, expressed using the structure of polygraph, of Karoubi envelopes of monoidal categories by generators and relations. From a finite polygraph that presents a monoidal category we construct a finite polygraph presenting its Karoubi envelope. We wish to generalize the notion of Karoubi envelope to higher-dimensional (strict) categories.

Karoubi envelopes are used to construct categorifications of algebras. Categorification is a process giving from an algebra $A$ a linear monoidal category whose Grothendieck group is isomorphic to $A$ \cite{Cr,Maz}. An example of a categorification is the Khovanov homology \cite{Kh}, a categorification of the Jones polynomials. This categorification was used to give a new proof of Milnor's conjecture \cite{Ras}. Algebras like Hecke algebras \cite{EW} can be categorified by the Karoubi envelope of a diagrammatic category, that is a monoidal category in which the morphism spaces are depicted by string diagrams, the 0-composition by horizontal concatenation and the 1-composition by vertical concatenation. Khovanov conjectured that the Karoubi envelope of some diagrammatic category \cite{Kho} categorifies the Heisenberg algebra. More generally, we are interested in categories defined by generators and relations and the categorifications induced by the Karoubi envelopes of such categories.

Polygraphs were independently introduced by Street and Burroni \cite{Street87,Burroni93} as systems of generators and oriented relations, or rewriting rules, for higher-dimensional categories. For $n \geqslant 1$, an $(n+1)$-polygraph is a presentation of an $n$-category by generators and relations. In particular, a 3-polygraph with only one 0-cell is a presentation of a monoidal category. A linear variation of polygraphs was introduced in \cite{All} to present linear categories. A linear $(n,p)$-category is an $n$-category with a linear structure on its set of $k$-cells for any $k\geqslant p$. A linear $(n+1,n)$-polygraph is a rewriting system on the $n$-cells of a linear $(n,n)$-category. This rewriting system presents a  linear $(n,n)$-category. In particular, a $(3,2)$-linear polygraph is a presentation of a linear monoidal category.

A coherent presentation of an $n$-category $\mathcal{C}$ is a data made of an $(n+1)$-polygraph $\Sigma$ presenting $\mathcal{C}$ and a family of $(n+2)$-cells $\Sigma_{n+2}$ such that the quotient of the free $(n+1,n)$-category over $\Sigma$ by the congruence generated by $\Sigma_{n+2}$ is aspherical. Coherence problems appear for instance in the construction of resolutions called polygraphic resolutions \cite{GaussentGuiraudMalbos15}.

In this article, we define a generalization of the Karoubi envelope for $n$-categories and construct the Karoubi envelope of an $(n+1)$-polygraph. We also give an adaptation of this definition to linear $(n+1,n)$-polygraphs. Our first goal is to generalize the notion of a Grothendieck decategorification to linear $(n,n)$-categories. Our second goal is to construct from a linear $(n,n)$-category, presented by a linear $(n+1,n)$-polygraph a linear $(n,n-1)$-polygraph, presenting its Grothendieck decategorification. With this work, we can present first the Karoubi envelope of a linear $(n,n)$-category from a linear $(n+1,n)$-polygraph presenting this linear $(n,n)$-category. The next step is to present the Grothendieck decategorification of this Karoubi envelope to solve categorification problems.

In the first section of this paper, we recall the notions of Karoubi envelope and polygraph. Then, in the second section, we define a notion of a Karoubi envelope for polygraphs and give a coherence result on this construction \ref{result}. We then recall the definition of linear $(n,p)$-polygraphs and give similar results on their Karoubi envelopes \ref{reslin}. Finally, in the last section, we define the Grothendieck decategorification of an $(n,n)$-category, already defined by Mazorchuk \cite{Maz2} for $n=2$, and the Grothendieck decategorification of a linear $(n+1,n)$-polygraph. We prove that the Grothendieck decategorification of a linear $(n+1,n)$-polygraph $\Sigma$ presents the Grothendieck decategorification of the linear $(n,n)$-category presented by $\Sigma$, see \ref{presentation}. We conclude with a result which allows to decide if decompositions as a direct sum of indecomposable $(n-1)$-cells in a linear $(n,n)$-category are unique up to isomorphism or not \ref{thKS}. Answering negatively this question implies the linear $(n,n)$-category is not Krull-Schmidt.

\section{Karoubi envelopes and polygraphs}

We recall in this section the notions of Karoubi envelope and polygraph.

\subsection{Karoubi envelope of an $n$-category}

\subsubsection{Categorical notations} In an $n$-category, for any $0 \leqslant k<n$, we denote the $k$-composition by $\star_k$. For all $0 \leqslant i<j \leqslant n-1$ the following equality, called \emph{exchange relation}, holds:
\begin{equation}\label{exch} (u \star_i v) \star_j (u' \star_i v')=(u \star_j u') \star_i (v \star_j v'). \end{equation}
An \emph{$(n,p)$-category} is an $n$-category whose $k$-cells are invertible for the $(k-1)$-composition for any $p<k\leqslant n$. We denote by $\mathbf{Cat}_n$ the category of $n$-categories and~$n$-functors. This category has a terminal object $I_n$ with only one $k$-cell for~$0 \leqslant k \leqslant n$.  An \emph{$(n,p)$-category} is an $n$-category whose $k$-cells are invertible for the $k$-composition for any $p<k\leqslant n$. We denote by $\mathbf{Cat}_{n,p}$ the category of $(n,p)$-categories.

\subsubsection{Idempotents and Karoubi envelope} Let $n\geqslant 1$ be an integer and $\mathcal{C}$ be an $n$-category. An \emph{idempotent} of $\mathcal{C}$ is an $n$-cell $e$ of $\mathcal{C}$ such that $e \star_{n-1} e=e$. Note that the $(n-1)$-source and the $(n-1)$-target of an idempotent are necessarily equal. If there are no integer $k<n-1$ and idempotents $e'$ and $e''$ such that, $e=e'\star_ke''$, we say that the idempotent $e$ is \emph{minimal}. We say that the idempotent $e$ is \emph{split} if there exists an $(n-1)$-cell $A$ of $\mathcal{C}$, an $n$-cell $p$ from $s_{n-1}(e)$ to $A$ and an $n$-cell $p$ from $A$ to $s_{n-1}(e)$ such that:
\begin{itemize}
\item $p \star_{n-1} i=e$,
\item $i \star_{n-1} p=1_{s_{n-1}(e)}$.
\end{itemize}

The \emph{Karoubi envelope} of the $n$-category $\mathcal{C}$ is the $n$-category $\kar{\mathcal{C}}$ such that:
\begin{itemize}
\item $\kar{\mathcal{C}}$ has the same $k$-cells than $\mathcal{C}$ for $k<n-1$,
\item $\kar{\mathcal{C}}$ has an $(n-1)$-cell $A_e$ from $s_{n-2}(e)$ to $t_{n-2}(e)$ for each idempotent $e$ of $\mathcal{C}$,
\item for $k<n-1$, for each $k$-composable idempotents $e$ and $e'$ of $\mathcal{C}$, we have $A_e \star_k A_{e'}=A_{e \star_k e'}$,
\item $\kar{\mathcal{C}}$ has an $n$-cell $\alpha (e,f,e')$ from $A_e$ to $A_{e'}$ for each triple $(e,f,e')$ of $n$-cells of $\mathcal{C}$ such that $e$ and $e'$ are idempotents verifying $f=e \star_{n-1} f \star_{n-1} e'$,
\item for $k<n-1$, for each pairs of $k$-composable idempotents $(e_1,e_2)$ and $(e'_1,e'_2)$ of $\mathcal{C}$ and each $k$-composable $n$-cells $f_1$ and $f_2$ of $\mathcal{C}$ such that $\alpha (e_1,f_1,e'_1)$ and $\alpha (e_2,f_2,e'_2)$ are defined, we have $\alpha (e_1,f_1,e'_1) \star_k \alpha (e_2,f_2,e'_2)=\alpha (e_1\star_k e_2,f_1\star_k f_2,e'_1\star_k e'_2)$,
\item for each $(n-1)$-composable $n$-cells $f$ and $g$ of $\mathcal{C}$ and each triple $(e,e',e'')$ of idempotents of $\mathcal{C}$ such that $f=e \star_{n-1} f \star_{n-1} e'$ and $g=e' \star_{n-1} g \star_{n-1} e''$, we have $\alpha (e,f,e') \star_{n-1} \alpha (e',g,e'')=\alpha (e,f\star_{n-1} g,e'')$.
\end{itemize}

The $n$-category $\mathcal{C}$ is isomorphic to a sub $n$-category of $\kar{\mathcal{C}}$. An injective $n$-functor $F$ from $\mathcal{C}$ to $\kar{\mathcal{C}}$ is defined by:
\begin{itemize}
\item $F_k$ is an identity for any $k<n-1$,
\item $F_{n-1}(x)=A_{1_x}$ for any $(n-1)$-cell $x$ of $\mathcal{C}$,
\item $F_n(a)=\alpha (1_{s_{n-1}(a)},a,1_{s_{n-1}(a)})$ for any $n$-cell $a$ of $\mathcal{C}$.
\end{itemize}

From now on, we will consider any $k$-cell of $\mathcal{C}$ as a $k$-cell of $\kar{\mathcal{C}}$ by applying this injective $n$-functor.

\subsubsection{Remark} In the $n$-category $\kar{\mathcal{C}}$, the identity $n$-cell of $A_e$ is the $n$-cell $\alpha (e,e,e)$.

\subsubsection{The canonical surjection $n$-functor}\label{surj} Let $\mathcal{C}$ be an $n$-category. There is a surjective $n$-functor $\mathrm{CS}$ from $\kar{\mathcal{C}}$ to $\mathcal{C}$ defined by:
\begin{itemize}
\item the restriction of $\mathrm{CS}$ to $\mathcal{C}$ is an identity,
\item $\mathrm{CS}_{n-1}(A_e)=s_{n-1}(e)$ for any idempotent $e$ of $\mathcal{C}$,
\item $\mathrm{CS}_n(\alpha (e,f,e'))=f$ for any minimal idempotent $e$ of $\mathcal{C}$ for each triple $(e,f,e')$ of $n$-cells of $\mathcal{C}$ such that $e$ and $e'$ are idempotents verifying $f=e \star_{n-1} f \star_{n-1} e'$.
\end{itemize}
We call this $n$-functor the \emph{canonical surjection $n$-functor} from $\kar{\mathcal{C}}$ to $\mathcal{C}$.

\subsubsection{Proposition} Let $\mathcal{C}$ be an $n$-category and let $\kar{\mathcal{C}}$ be its Karoubi envelope. In the $n$-category $\kar{\mathcal{C}}$, all idempotents are split.

\begin{proof} All idempotents of $\kar{\mathcal{C}}$ can be written $\alpha (e',e,e')$ where $e$ and $e'$ are idempotents of $\mathcal{C}$ verifying $e' \star_{n-1} e\star_{n-1} e'=e$. This implies:
$$e' \star_{n-1} e= e' \star_{n-1} e\star_{n-1} e=e' \star_{n-1} e\star_{n-1} e' \star_{n-1} e\star_{n-1} e'=e\star_{n-1} e\star_{n-1} e'=e\star_{n-1} e'.$$
Thus, we obtain:
$$e' \star_{n-1} e= e' \star_{n-1} e'\star_{n-1} e=e' \star_{n-1} e\star_{n-1} e'=e,$$
$$e \star_{n-1} e'= e \star_{n-1} e'\star_{n-1} e'=e' \star_{n-1} e\star_{n-1} e'=e.$$
Let us now prove that the idempotent $\alpha (e',e,e')$ of $\kar{\mathcal{C}}$ is split. Because of the equalities $e' \star_{n-1} e\star_{n-1} e=e=e \star_{n-1} e\star_{n-1} e'$, the $n$-cells $\alpha (e',e,e)$ and $\alpha (e,e,e')$ of $\kar{\mathcal{C}}$ are well-defined. The equalities $\alpha (e',e,e) \star_{n-1} \alpha (e,e,e')=\alpha (e',e,e')$ and $\alpha (e,e,e') \star_{n-1} \alpha (e',e,e)=\alpha (e,e,e)=1_{A_e}$ conclude the proof. \end{proof}

\subsection{Polygraphs}

In this section, we recall the definition of $n$-graphs. We also recall the constructions of globular extensions and $(n,p)$-polygraphs given in \cite{Metayer08}.

\subsubsection{Definition of $n$-graphs} An \emph{$n$-graph} in a category~$\mathbf{C}$ is a diagram in $\mathbf{C}$:
\begin{gather*}
\begin{array}{c}
\tikz[scale=0.9]{
\node at (0,0) {$G_0$};
\draw[color=black, ->] (1.5,0.25) -- (0.5,0.25);
\node at (1,0.5) {$s_0$};
\draw[color=black, ->] (1.5,-0.25) -- (0.5,-0.25);
\node at (1,-0.5) {$t_0$};
\node at (2,0) {$G_1$};
\draw[color=black, ->] (3.5,0.25) -- (2.5,0.25);
\node at (3,0.5) {$s_1$};
\draw[color=black, ->] (3.5,-0.25) -- (2.5,-0.25);
\node at (3,-0.5) {$t_1$};
\node at (4,0) {$\cdots$};
\draw[color=black, ->] (5.5,0.25) -- (4.5,0.25);
\node at (5,0.5) {$s_{n-2}$};
\draw[color=black, ->] (5.5,-0.25) -- (4.5,-0.25);
\node at (5,-0.5) {$t_{n-2}$};
\node at (6,0) {$G_{n-1}$};
\draw[color=black, ->] (7.5,0.25) -- (6.5,0.25);
\node at (7,0.5) {$s_{n-1}$};
\draw[color=black, ->] (7.5,-0.25) -- (6.5,-0.25);
\node at (7,-0.5) {$t_{n-1}$};
\node at (8,0) {$G_n$};
} \end{array}
\end{gather*}
such that for any $1 \leqslant k \leqslant n-1$, we have~$s_{k-1} \circ s_k=s_{k-1} \circ t_k$ and~$t_{k-1} \circ s_k=t_{k-1} \circ t_k$. Those relations are called the \emph{globular relations}. We just call an $n$-graph in $\mathbf{Set}$ an $n$-graph.

The elements of $G_k$ are called~\emph{$k$-cells}. The maps $s_k$ and~$t_k$ are respectively called~\emph{$k$-source} and~\emph{$k$-target maps}. For any $l$-cell~$u$ of $G$ with~$l > k+1$, we respectively denote by $s_k(u)$ and~$t_k(u)$ the $k$-cells $(s_k \circ \cdots  \circ s_{l-1})(u)$ and~$(t_k \circ \cdots  \circ t_{l-1})(u)$.

A \emph{morphism of $n$-graphs} $F$ from $G$ to~$G'$ is a collection~$(F_k: G_k \rightarrow G'_k)$ of maps such that, for every~$0 < k \leqslant n$, the following diagrams commute:
\begin{gather*}
\begin{array}{c}
\tikz[scale=0.9]{
\node at (0,0) {$G_{k-1}$};
\draw[color=black, ->] (1.5,0) -- (0.5,0);
\node at (1,0.5) {$s_{k-1}$};
\node at (2,0) {$G_k$};
\draw[color=black, ->] (0,-0.3) -- (0,-1.7);
\node at (-0.5,-1) {$F_{k-1}$};
\draw[color=black, ->] (2,-0.3) -- (2,-1.7);
\node at (2.5,-1) {$F_k$};
\node at (0,-2) {$G'_{k-1}$};
\draw[color=black, ->] (1.5,-2) -- (0.5,-2);
\node at (1,-2.25) {$s'_{k-1}$};
\node at (2,-2) {$G'_k$};
} \end{array}
\qquad
\begin{array}{c}
\tikz[scale=0.9]{
\node at (0,0) {$G_{k-1}$};
\draw[color=black, ->] (1.5,0) -- (0.5,0);
\node at (1,0.5) {$t_{k-1}$};
\node at (2,0) {$G_k$};
\draw[color=black, ->] (0,-0.3) -- (0,-1.7);
\node at (-0.5,-1) {$F_{k-1}$};
\draw[color=black, ->] (2,-0.3) -- (2,-1.7);
\node at (2.5,-1) {$F_k$};
\node at (0,-2) {$G'_{k-1}$};
\draw[color=black, ->] (1.5,-2) -- (0.5,-2);
\node at (1,-2.25) {$t'_{k-1}$};
\node at (2,-2) {$G'_k$};
} \end{array}
\end{gather*}

\subsubsection{Globular extensions} The category~$\mathbf{Cat}_n^+$ of $n$-categories with a globular extension is defined by the following pullback diagram:
\begin{gather*}
\begin{array}{c}
\tikz[scale=0.9]{
\node at (0,0) {$\mathbf{Cat}_n^+$};
\draw[color=black, ->] (0.5,0) -- (2,0);
\node at (2.9,0) {$\mathbf{Grph}_{n+1}$};
\draw[color=black, ->] (2.7,-0.5) -- (2.7,-2);
\node at (2.7,-2.5) {$\mathbf{Grph}_n$};
\draw[color=black, ->] (0,-0.5) -- (0,-2);
\node at (0,-2.5) {$\mathbf{Cat}_n$};
\draw[color=black, ->] (0.5,-2.5) -- (2,-2.5);
\node at (2.7,-2.5) {$\mathbf{Grph}_n$};
\draw[color=black] (0.3,-0.5) -- (0.55,-0.5) -- (0.55,-0.25);
\node at (1.25,-3) {$\mathcal{U}_n$};
\node at (3.2,-1.25) {$\mathcal{U}_n^G$};
} \end{array}
\end{gather*}
where $\mathcal{U}_n^G$ is the functor from $\mathbf{Grph}_{n+1}$ to~$\mathbf{Grph}_n$ associating to each $(n+1)$-graph its underlying~$n$-graph by eliminating the $(n+1)$-cells. The objects of $\mathbf{Cat}_n^+$ are of the form $(\mathcal{C},\Gamma)$ where $\mathcal{C}$ is an $n$-category and $\Gamma$ a set of $(n+1)$-cells.

\subsubsection{Free constructions over a globular extension} Let $(\mathcal{C},\Gamma)$ be an object of $\mathbf{Cat}_n^+$. The \emph{free~$(n+1)$-category} over~$(\mathcal{C},\Gamma)$ is the $(n+1)$-category whose underlying~$n$-category is $\mathcal{C}$ and whose~$(n+1)$-cells are the compositions of elements of $\Gamma$ and elements of the form $1_u$ where $u$ is in $\mathcal{C}_n$. The free functor from $\mathbf{Cat}_n^+$ to~$\mathbf{Cat}_{n+1}$ is denoted by $\mathcal{F}_{n+1}^W$. The \emph{free~$(n+1,n)$-category} over~$(\mathcal{C},\Gamma)$ is the $(n+1,n)$-category obtained by adding to the free~$(n+1)$-category over~$(\mathcal{C},\Gamma)$ formal inverses to its $(n+1)$-cells for the $n$-composition.

\subsubsection{Homotopy bases} A globular extension $\Gamma$ of the $n$-category~$\mathcal{C}$ is called a \emph{homotopy basis} of~$\mathcal{C}$ if for any $n$-sphere $(f,g)$ of $\mathcal{C}$, the free $(n+1,n)$-category over~$(\mathcal{C},\Gamma)$ has an $(n+1)$-cell from $f$ to $g$.

\subsubsection{Polygraphs} The category $\mathbf{Pol}_0$ of 0-polygraphs is the category of sets and the functor~$\mathcal{F}_0$ from $\mathbf{Pol}_0$ to~$\mathbf{Cat}_0$ is the identity functor. Let us assume the category~$\mathbf{Pol}_n$ of $n$-polygraphs and the functor~$\mathcal{F}_n$ from $\mathbf{Pol}_n$ to~$\mathbf{Cat}_n$ are defined. The category~$\mathbf{Pol}_{n+1}$ is defined by the following pullback diagram:
\begin{gather*}
\begin{array}{c}
\tikz[scale=0.9]{
\node at (0,0) {$\mathbf{Pol}_{n+1}$};
\draw[color=black, ->] (0.7,0) -- (5,0);
\node at (5.9,0) {$\mathbf{Grph}_{n+1}$};
\draw[color=black, ->] (5.7,-0.5) -- (5.7,-2);
\node at (5.7,-2.5) {$\mathbf{Grph}_n$};
\draw[color=black, ->] (0,-0.5) -- (0,-2);
\node at (0,-2.5) {$\mathbf{Pol}_n$};
\draw[color=black, ->] (0.5,-2.5) -- (2,-2.5);
\node at (2.5,-2.5) {$\mathbf{Cat}_n$};
\draw[color=black, ->] (3,-2.5) -- (5,-2.5);
\draw[color=black] (0.3,-0.5) -- (0.55,-0.5) -- (0.55,-0.25);
\node at (4,-3) {$\mathcal{U}_n$};
\node at (1,-3) {$\mathcal{F}_n$};
\node at (6.2,-1.25) {$\mathcal{U}_n^G$};
\node at (-0.5,-1.25) {$\mathcal{U}_n^P$};
\node at (2.85,0.5) {$\mathcal{U}_{n+1}^{GP}$};
} \end{array}
\end{gather*}
We denote by $\mathcal{F}_{n+1}^P$ the unique functor making the following diagram commutative:
\begin{gather*}
\begin{array}{c}
\tikz[scale=0.9]{
\node at (-3,2) {$\mathbf{Pol}_{n+1}$};
\draw[color=black, ->] (-2.3,1.5) -- (-0.6,0.4);
\draw[color=black, ->] (-3,1.5) -- (-3,-2);
\node at (-3,-2.5) {$\mathbf{Pol}_n$};
\draw[color=black, ->] (-2.3,-2.5) -- (-0.5,-2.5);
\node at (-1.4,-3) {$\mathcal{F}_n$};
\draw[color=black, ->] (-2.3,2) -- (2,0.4);
\node at (-1.45,0.45) {$\mathcal{F}_{n+1}^P$};
\node at (0,0) {$\mathbf{Cat}_n^+$};
\draw[color=black, ->] (0.5,0) -- (2,0);
\node at (2.9,0) {$\mathbf{Grph}_{n+1}$};
\draw[color=black, ->] (2.7,-0.5) -- (2.7,-2);
\node at (2.7,-2.5) {$\mathbf{Grph}_n$};
\draw[color=black, ->] (0,-0.5) -- (0,-2);
\node at (0,-2.5) {$\mathbf{Cat}_n$};
\draw[color=black, ->] (0.5,-2.5) -- (2,-2.5);
\node at (2.7,-2.5) {$\mathbf{Grph}_n$};
\draw[color=black] (0.3,-0.5) -- (0.55,-0.5) -- (0.55,-0.25);
\node at (1.25,-3) {$\mathcal{U}_n$};
\node at (3.2,-1.25) {$\mathcal{U}_n^G$};
\node at (-3.5,-0.25) {$\mathcal{U}_n^P$};
\node at (0.5,1.45) {$\mathcal{U}_{n+1}^{GP}$};
} \end{array}
\end{gather*}
The functor~$\mathcal{F}_{n+1}$ is defined as the following composite:
\begin{gather*}
\begin{array}{c}
\tikz[scale=0.9]{
\node at (0,0) {$\mathbf{Pol}_{n+1}$};
\draw[color=black, ->] (0.7,0) -- (2.7,0);
\node at (3.2,0) {$\mathbf{Cat}_n^+$};
\draw[color=black, ->] (3.7,0) -- (5.7,0);
\node at (6.4,0) {$\mathbf{Cat}_{n+1}$};
\node at (1.7,0.5) {$\mathcal{F}_{n+1}^P$};
\node at (4.7,0.5) {$\mathcal{F}_{n+1}^W$};
} \end{array}
\end{gather*}
Given an $n$-polygraph $\Sigma$, we call $\Sigma^*$ the free $n$-category over $\Sigma$.

Similarly, we construct the category $\mathbf{Pol}_{n,p}$ and the functor$\mathcal{F}_{n,p}$ by induction on $n \geqslant p$. We define first the category~$\mathbf{Cat}_{n,p}^+$ of $(n,p)$-categories with a globular extension by the following pullback diagram:
\begin{gather*}
\begin{array}{c}
\tikz[scale=0.9]{
\node at (0,0) {$\mathbf{Cat}_{n,p}^+$};
\draw[color=black, ->] (0.5,0) -- (2,0);
\node at (2.9,0) {$\mathbf{Grph}_{n+1}$};
\draw[color=black, ->] (2.7,-0.5) -- (2.7,-2);
\node at (2.7,-2.5) {$\mathbf{Grph}_n$};
\draw[color=black, ->] (0,-0.5) -- (0,-2);
\node at (0,-2.5) {$\mathbf{Cat}_n$};
\draw[color=black, ->] (0.5,-2.5) -- (2,-2.5);
\node at (2.7,-2.5) {$\mathbf{Grph}_n$};
\draw[color=black] (0.3,-0.5) -- (0.55,-0.5) -- (0.55,-0.25);
\node at (1.25,-3) {$\mathcal{U}_n$};
\node at (3.2,-1.25) {$\mathcal{U}_{n,p}^G$};
} \end{array}
\end{gather*}
with $\mathcal{U}_{n,p}^G$ the forgetful functor from $\mathbf{Cat}_{n,p}^+$ to $\mathbf{Grph}_n$. Next, we define $\mathbf{Pol}_{n,n}=\mathbf{Pol}_n$ and $\mathcal{F}_{n,n}=\mathcal{F}_n$. Assuming the category $\mathbf{Pol}_{n,p}$ and the functor $\mathcal{F}_{n,p}$ from $\mathbf{Pol}_{n,p}$ to $\mathbf{Cat}_{n,p}$ are constructed, we now define $\mathbf{Pol}_{n+1,p}$ by the pullback diagram:
\begin{gather*}
\begin{array}{c}
\tikz[scale=0.9]{
\node at (0,0) {$\mathbf{Pol}_{n+1,p}$};
\draw[color=black, ->] (0.7,0) -- (5,0);
\node at (5.9,0) {$\mathbf{Grph}_{n+1}$};
\draw[color=black, ->] (5.7,-0.5) -- (5.7,-2);
\node at (5.7,-2.5) {$\mathbf{Grph}_n$};
\draw[color=black, ->] (0,-0.5) -- (0,-2);
\node at (0,-2.5) {$\mathbf{Pol}_{n,p}$};
\draw[color=black, ->] (0.5,-2.5) -- (1.9,-2.5);
\node at (2.5,-2.5) {$\mathbf{Cat}_{n,p}$};
\draw[color=black, ->] (3.1,-2.5) -- (5,-2.5);
\draw[color=black] (0.3,-0.5) -- (0.55,-0.5) -- (0.55,-0.25);
\node at (4,-3) {$\mathcal{U}_n$};
\node at (1,-3) {$\mathcal{F}_{n,p}$};
\node at (6.2,-1.25) {$\mathcal{U}_{n,p}^G$};
\node at (-0.5,-1.25) {$\mathcal{U}_n^P$};
\node at (2.85,0.5) {$\mathcal{U}_{n+1}^{GP}$};
} \end{array}
\end{gather*}
and define $\mathcal{F}_{n,p}^P$ as the unique functor making the following diagram commutative:
\begin{gather*}
\begin{array}{c}
\tikz[scale=0.9]{
\node at (-3,2) {$\mathbf{Pol}_{n+1,p}$};
\draw[color=black, ->] (-2.3,1.5) -- (-0.6,0.4);
\draw[color=black, ->] (-3,1.5) -- (-3,-2);
\node at (-3,-2.5) {$\mathbf{Pol}_{n,p}$};
\draw[color=black, ->] (-2.3,-2.5) -- (-0.5,-2.5);
\node at (-1.4,-3) {$\mathcal{F}_{n,p}$};
\draw[color=black, ->] (-2.3,2) -- (2,0.4);
\node at (-1.45,0.45) {$\mathcal{F}_{n+1,p}^P$};
\node at (0,0) {$\mathbf{Cat}_n^+$};
\draw[color=black, ->] (0.5,0) -- (2,0);
\node at (2.9,0) {$\mathbf{Grph}_{n+1}$};
\draw[color=black, ->] (2.7,-0.5) -- (2.7,-2);
\node at (2.7,-2.5) {$\mathbf{Grph}_n$};
\draw[color=black, ->] (0,-0.5) -- (0,-2);
\node at (0,-2.5) {$\mathbf{Cat}_n$};
\draw[color=black, ->] (0.5,-2.5) -- (2,-2.5);
\node at (2.7,-2.5) {$\mathbf{Grph}_n$};
\draw[color=black] (0.3,-0.5) -- (0.55,-0.5) -- (0.55,-0.25);
\node at (1.25,-3) {$\mathcal{U}_n$};
\node at (3.2,-1.25) {$\mathcal{U}_{n,p}^G$};
\node at (-3.5,-0.25) {$\mathcal{U}_{n,p}^P$};
\node at (0.5,1.45) {$\mathcal{U}_{n+1}^{GP}$};
} \end{array}
\end{gather*}
to finally define $\mathcal{F}_{n+1,p}$ as the following composite:
\begin{gather*}
\begin{array}{c}
\tikz[scale=0.9]{
\node at (0,0) {$\mathbf{Pol}_{n+1,p}$};
\draw[color=black, ->] (0.7,0) -- (2.6,0);
\node at (3.2,0) {$\mathbf{Cat}_{n,p}^+$};
\draw[color=black, ->] (3.7,0) -- (5.6,0);
\node at (6.4,0) {$\mathbf{Cat}_{n+1,p}$};
\node at (1.7,0.5) {$\mathcal{F}_{n+1,p}^P$};
\node at (4.7,0.5) {$\mathcal{F}_{n+1,p}^W$};
} \end{array}
\end{gather*}

Given an $(n,p)$-polygraph $\Sigma$, we call $\Sigma^\top$ the free $(n,p)$-category over $\Sigma$.

\subsubsection{Presentation of an $n$-category} Let $\mathcal{C}$ be an $n$-category. An $(n+1)$-polygraph $\Sigma$ is said to \emph{present} the $n$-category $\mathcal{C}$ if $\mathcal{C}$ is isomorphic to $\Sigma_n^*/\Sigma_{n+1}$. Two $(n+1)$-polygraphs are \emph{Tietze equivalent} if they present the same $n$-category. A \emph{coherent presentation} of the $n$-category $\mathcal{C}$ is an $(n+2,n)$-polygraph $\Sigma$ such that the $(n+1)$-polygraph $\Sigma_{n+1}$ is a presentation of $\mathcal{C}$ and the set $\Sigma_{n+2}$ is a homotopy basis of $\Sigma_{n+1}^{\top}$.

\section{Coherent presentation of a Karoubi envelope}

In this section, we define the Karoubi envelope of an $(n+1)$-polygraph and construct a coherent presentation of the Karoubi envelope of an $n$-category $\mathcal{C}$ from a coherent presentation of $\mathcal{C}$. We then recall the definition of linear an $(n,p)$-polygraph and the Karoubi envelope of a linear $(n+1,n)$-polygraph. We finally give a construction of a coherent presentation of the Karoubi envelope of a linear $(n+1,n)$-category $\mathcal{C}$ from a coherent presentation of $\mathcal{C}$.

\subsection{Presentation of Karoubi envelopes}

\subsubsection{Karoubi envelope of an $(n+1)$-polygraph}\label{karpol} Let $\Sigma$ be an $(n+1)$-polygraph. The Karoubi envelope of $\Sigma$ is the $(n+1)$-polygraph $\kar{\Sigma}$ defined by:
\begin{itemize}
\item $\kar{\Sigma}_k=\Sigma_k$ for $k<n-1$,
\item $\kar{\Sigma}_{n-1}=\Sigma_{n-1}\cup \{A_e|~\text{$e$ is a minimal idempotent of $\mathcal{C}$}\}$,
\item for each minimal idempotent $e$ of $\mathcal{C}$, we have $s_{n-2}(A_e)=s_{n-2}(e)$ and $t_{n-2}(A_e)=t_{n-2}(e)$,
\item $\kar{\Sigma}_n=\Sigma_n\cup \{p_e,i_e|~\text{$e$ is a minimal idempotent of $\mathcal{C}$}\}$,
\item for each minimal idempotent $e$ of $\mathcal{C}$, we have $s_{n-1}(p_e)=s_{n-1}(e)$ and $t_{n-1}(p_e)=A_e$,
\item for each minimal idempotent $e$ of $\mathcal{C}$, we have $s_{n-1}(i_e)=A_e$ and $t_{n-1}(i_e)=t_{n-1}(e)$,
\item $\kar{\Sigma}_{n+1}=\Sigma_{n+1}\cup \{\pi_e,\iota_e|~\text{$e$ is a minimal idempotent of $\mathcal{C}$}\}$,
\item for each minimal idempotent $e$ of $\mathcal{C}$, we have $s_n(\pi_e)=e$ and $t_n(\pi_e)=p_e \star_n i_e$,
\item for each minimal idempotent $e$ of $\mathcal{C}$, we have $s_n(\iota_e)=i_e \star_n p_e$ and $t_n(\iota_e)=1_{s_n(e)}$.
\end{itemize}

\begin{proposition}\label{pol} Let $\mathcal{C}$ be an $n$-category presented by an $(n+1)$-polygraph $\Sigma$. The Karoubi envelope of $\mathcal{C}$ is presented by the $(n+1)$-polygraph $\kar{\Sigma}$. \end{proposition}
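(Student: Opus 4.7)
The plan is to exhibit an isomorphism between the $n$-category $\overline{\kar{\Sigma}} := \kar{\Sigma}_n^{*}/\kar{\Sigma}_{n+1}$ presented by $\kar{\Sigma}$ and the Karoubi envelope $\kar{\mathcal{C}}$. I first define an $n$-functor $\Phi \colon \kar{\Sigma}_n^{*} \to \kar{\mathcal{C}}$ on generators: generators in $\Sigma_n$ are sent to their classes in $\mathcal{C}$ followed by the canonical inclusion $\mathcal{C} \hookrightarrow \kar{\mathcal{C}}$; each new $(n-1)$-generator $A_e$ is sent to the homonymous $(n-1)$-cell of $\kar{\mathcal{C}}$; and $p_e$, $i_e$ are sent respectively to $\alpha(1_{s_{n-1}(e)}, e, e)$ and $\alpha(e, e, 1_{s_{n-1}(e)})$. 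Using the composition rule $\alpha(e, f, e') \star_{n-1} \alpha(e', g, e'') = \alpha(e, f \star_{n-1} g, e'')$ one checks that $\Phi$ sends both sides of $\pi_e$ to $\alpha(1, e, 1)$ and both sides of $\iota_e$ to $\alpha(e, e, e) = 1_{A_e}$; the relations of $\Sigma_{n+1}$ are mapped to identities since $\Sigma$ presents $\mathcal{C}$. Hence $\Phi$ induces an $n$-functor $\overline{\Phi} \colon \overline{\kar{\Sigma}} \to \kar{\mathcal{C}}$.

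To prove $\overline{\Phi}$ is surjective on $k$-cells, I observe that every $(n-1)$-cell of $\kar{\mathcal{C}}$ is of the form $A_e$ for some idempotent $e$ of $\mathcal{C}$. Using the defining relation $A_{e_1 \star_k e_2} = A_{e_1} \star_k A_{e_2}$ and decomposing $e$ as a $k$-composite ($k < n-1$) of minimal idempotents, $A_e$ appears as a composite of generators already in $\kar{\Sigma}_{n-1}$. For an $n$-cell $\alpha(e, f, e')$ of $\kar{\mathcal{C}}$, the analogous compositional rules let me split it into elementary blocks of the form $\alpha(1, g, 1)$---images of cells of $\mathcal{C}$, hence of $\Sigma_n^{*}$---sandwiched between $p_{e_i}$'s and $i_{e'_j}$'s.

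Finally, for injectivity I construct an inverse $n$-functor $\Psi \colon \kar{\mathcal{C}} \to \overline{\kar{\Sigma}}$. For each idempotent $e$ of $\mathcal{C}$, I fix a decomposition $e = e_1 \star_{k_1} \cdots \star_{k_r} e_r$ into minimal idempotents together with a preimage of $e$ in $\Sigma_n^{*}$; these choices determine $\Psi(A_e)$ as the corresponding composite of the $A_{e_i}$'s, and $\Psi(\alpha(e,f,e'))$ as a composite of the $p_{e_i}$'s, a chosen lift of $f$, and the $i_{e'_j}$'s. The delicate point---and the main obstacle I expect---is independence of the choices: different decompositions of the same idempotent, or different lifts of $f$ to $\Sigma_n^{*}$, must yield equal cells in $\overline{\kar{\Sigma}}$. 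Independence of the lift of $f$ is controlled by $\Sigma_{n+1}$ (since $\Sigma$ presents $\mathcal{C}$); independence of the idempotent decomposition is controlled by the $(n+1)$-cells $\pi_e$ and $\iota_e$, which precisely encode the substitutions $e \leftrightarrow p_e \star_{n-1} i_e$ and $1_{A_e} \leftrightarrow i_e \star_{n-1} p_e$, combined with the exchange relation \eqref{exch} to move between different orders of composition. Once $\Psi$ is well-defined, the compositions $\overline{\Phi} \circ \Psi$ and $\Psi \circ \overline{\Phi}$ agree with the identity on generators and therefore everywhere, yielding the desired isomorphism.
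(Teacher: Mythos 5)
Your argument follows essentially the same route as the paper's proof: the same $n$-functor on generators (sending $p_e$ to $\alpha(1_{s_{n-1}(e)},e,e)$ and $i_e$ to $\alpha(e,e,1_{s_{n-1}(e)})$), the same check that $\pi_e$ and $\iota_e$ become equalities in $\kar{\mathcal{C}}$, and the same surjectivity argument via decomposition of an idempotent into $k$-composites of minimal idempotents. The only divergence is injectivity, which the paper simply asserts for its functor $F$; your construction of an inverse $\Psi$ and your explicit identification of the independence-of-choices issue is a more candid treatment of that step, though you leave it as a flagged obligation rather than carrying it out.
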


\begin{proof}Let $\kar{\mathcal{C}}$ be the Karoubi envelope of the $n$-category $\mathcal{C}$. For $k<n-1$, the $n$-category $\kar{\mathcal{C}}$ has the same $k$-cells than $\mathcal{C}$ and $\kar{\Sigma}_k=\Sigma_k$. Then, the $(n+1)$-polygraph $\kar{\Sigma}$ presents an $n$-category with the same $k$-cells than $\kar{\mathcal{C}}$. Let us now prove that $\kar{\Sigma}$ presents an $n$-category with the same $(n-1)$-cells than $\kar{\mathcal{C}}$. Let $e$ be an idempotent of $\mathcal{C}$ and let us write:
$$e=e_0 \star_{k_1} e_1 \star_{k_2} \cdots \star_{k_m} e_m$$
where all $e_i$ are minimal idempotents and all $k_i$ are integer smaller than $n-1$. We can write:
$$A_e=A_{e_0} \star_{k_1} A_{e_1} \star_{k_2} \cdots \star_{k_m} A_{e_m}$$
which corresponds to an $(n-1)$-cell in the $n$-category presented by $\kar{\Sigma}$. What remains to prove is that the $n$-category $\kar{\Sigma}^*/\kar{\Sigma}_{n+1}$ has the same $n$-cells and relations on $n$-cells than $\kar{\mathcal{C}}$. There is an injective $n$-functor $F$ from $\kar{\Sigma}^*/\kar{\Sigma}_{n+1}$ to $\kar{\mathcal{C}}$ defined by:
\begin{itemize}
\item $F$ sends each $n$-cell of $\Sigma_k$ onto its representative in $\mathcal{C}$,
\item for each minimal idempotent $e$ of $\mathcal{C}$, the $n$-functor $F$ sends the $n$-cell $p_e$ onto $\alpha (1_{s_{n-1}(e)},e,e)$,
\item for each minimal idempotent $e$ of $\mathcal{C}$, the $n$-functor $F$ sends the $n$-cell $i_e$ onto $\alpha (e,e,1_{s_{n-1}(e)})$.
\end{itemize}
Let us prove that the $n$-functor $F$ is surjective. Let $e$ be an idempotent of $\mathcal{C}$ and let us write again the decomposition into minimal idempotents:
$$e=e_0 \star_{k_1} e_1 \star_{k_2} \cdots \star_{k_m} e_m.$$
We then have the decompositions:
$$\alpha (1_{s_{n-1}(e)},e,e)=\alpha (1_{s_{n-1}(e_0)},e_0,e_0) \star_{k_1} \alpha (1_{s_{n-1}(e_1)},e_1,e_1)  \star_{k_2} \cdots \star_{k_m} \alpha (1_{s_{n-1}(e_m)},e_m,e_m),$$
$$\alpha (e,e,1_{s_{n-1}(e)})=\alpha (e_0,e_0,1_{s_{n-1}(e_0)}) \star_{k_1} \alpha (e_1,e_1,1_{s_{n-1}(e_1)})  \star_{k_2} \cdots \star_{k_m} =\alpha (e_m,e_m,1_{s_{n-1}(e_m)}).$$
Thus, the $n$-cells $\alpha (1_{s_{n-1}(e)},e,e)$ and $\alpha (e,e,1_{s_{n-1}(e)})$ are images by $F$ of $n$-cells of $\kar{\Sigma}^*/\kar{\Sigma}_{n+1}$. Let now $\alpha (e,f,e')$ be an $n$-cell of the $\kar{\mathcal{C}}$ such that $e$ and $e'$ are idempotents of $\mathcal{C}$. We have:
$$\alpha (e,f,e')=\alpha (e,e,1_{s_{n-1}(e)})\star_{n-1} \alpha (1_{s_{n-1}(e)},f,1_{s_{n-1}(e)})\star_{n-1} \alpha (1_{s_{n-1}(e)},e',e').$$
Thus, the $n$-cell $\alpha (e,f,e')$ is the image by $F$ of an $n$-cell of $\kar{\Sigma}^*/\kar{\Sigma}_{n+1}$. This concludes the proof. \end{proof}

\subsubsection{Example}\label{part1} Let $M$ be the monoid presented by the following 2-polygraph $\Sigma$ defined by:
\begin{itemize}
\item $\Sigma_0$ has only one 0-cell,
\item $\Sigma_1$ has two 1-cells $a$ and $b$,
\item $\Sigma_2$ has a 2-cell $\alpha$ from $aba$ to $a$.
\end{itemize}
The monoid $M$ has two minimal idempotents: $ab$ and $ba$. Thus, by \ref{pol}, the Karoubi envelope of $M$ is presented by the 2-polygraph $\kar{\Sigma}$ defined by:
\begin{itemize}
\item $\kar{\Sigma}_0=\{ O,X,Y\}$,
\item $\kar{\Sigma}_1=\{ O \ofl{a} O,O \ofl{b} O,O \ofl{p_X} X,X \ofl{i_X} O,O \ofl{p_Y} Y,Y \ofl{i_Y} O\}$,
\item $\kar{\Sigma}_2=\{ aba \odfl{\alpha} a,p_Xi_X \odfl{\pi_X} ab,i_Xp_X \odfl{\iota_X} 1_X,p_Yi_Y \odfl{\pi_Y} ba,i_Yp_Y \odfl{\iota_Y} 1_Y\}$.
\end{itemize}

\subsubsection{Karoubi envelope of a globular extension} Let $\mathcal{C}$ be an $n$-category. Let $\Gamma$ be a globular extension of $\mathcal{C}$. For each $(n+1)$-cell $A$ of $\Gamma$ with $n$-source $f$ and $n$-target $g$, we define the set $\mathrm{CS}^{-1}(A)$ as a set containing an $(n+1)$-cell from $f'$ to $g'$ for each parallel $n$-cells $f'$ and $g'$ of $\kar{\mathcal{C}}$ such that $\mathrm{CS}(f')=f$ and $\mathrm{CS}(g')=g$ with $\mathrm{CS}$ being the canonical surjection $n$-functor from $\kar{\mathcal{C}}$ to $\mathcal{C}$. The Karoubi envelope of the globular extension $\Gamma$ is the globular extension of $\kar{\mathcal{C}}$ defined by:
$$\kar{\Gamma}=\bigcup_{A \in \Gamma}\mathrm{CS}^{-1}(A).$$

\subsubsection{Theorem}\label{result} Let $\mathcal{C}$ be an $n$-category and let $(\Sigma,\Sigma_{n+2})$ be a coherent presentation of $\mathcal{C}$. The \linebreak $(n+2,n)$-polygraph ($\kar{\Sigma},\kar{\Sigma_{n+2}})$ is a coherent presentation of the Karoubi envelope of $\mathcal{C}$.

\begin{proof} We proceed in four steps.

\noindent {\bf Step 1.} Let $f$ and $g$ be parallel $(n+1)$-cells of $\kar{\Sigma}^\top$ such that there is an $(n+2)$-cell $A$ from $\mathrm{CS}(f)$ to $\mathrm{CS}(g)$ in $\Sigma_{n+2}$. We prove that there is an $(n+2)$-cell from $g$ to $f$ in $\kar{\Sigma_{n+2}}^\top$. There is an $(n+2)$-cell of $\mathrm{CS}^{-1}(A)$ from $f$ to $g$. The inverse of this $(n+2)$-cell is in $\kar{\Sigma_{n+2}}^\top$.

\noindent {\bf Step 2.} Let $f$ and $g$ be parallel $(n+1)$-cells of $\kar{\Sigma}^\top$ such that there is an $(n+2)$-cell $A$ from $\mathrm{CS}(f)$ to $\mathrm{CS}(g)$ in $\Sigma_{n+2}$. Let $f'$ and $g'$ be parallel $(n+1)$-cells of $\kar{\Sigma}^\top$ such that there is an $(n+2)$-cell $A'$ from $\mathrm{CS}(f')$ to $\mathrm{CS}(g')$ in $\Sigma_{n+2}$. Let us assume the $(n+1)$-cells $f \star_k f'$ and $g \star_k g'$ for an integer $k<n$. We prove that there is an $(n+2)$-cell from $f \star_k f'$ to $g \star_k g'$ in $\kar{\Sigma_{n+2}}^\top$. There is an $(n+2)$-cell of $\mathrm{CS}^{-1}(A)$ from $f$ to $g$ and an $(n+2)$-cell of $\mathrm{CS}^{-1}(A')$ from $f'$ to $g'$. Their $k$-composition is in $\kar{\Sigma_{n+2}}^\top$.

\noindent {\bf Step 3.} Let $f$, $g$ and $h$ be parallel $(n+1)$-cells of $\kar{\Sigma}^\top$ such that there is an $(n+2)$-cell $A$ from $\mathrm{CS}(f)$ to $\mathrm{CS}(g)$ in $\Sigma_{n+2}$ and an $(n+2)$-cell $B$ from $\mathrm{CS}(g)$ to $\mathrm{CS}(h)$ in $\Sigma_{n+2}$. We prove that there is an $(n+2)$-cell from $f$ to $h$ in $\kar{\Sigma_{n+2}}^\top$. There is an $(n+2)$-cell of $\mathrm{CS}^{-1}(A)$ from $f$ to $g$ and an $(n+2)$-cell of $\mathrm{CS}^{-1}(B)$ from $g$ to $h$. Their $n$-composition is in $\kar{\Sigma_{n+2}}^\top$.

\noindent {\bf Step 4.} Let $f$ and $g$ be parallel $(n+1)$-cells of $\kar{\Sigma}^\top$. We prove that there is an $(n+2)$-cell from $f$ to $g$ in $\kar{\Sigma_{n+2}}^\top$. Because $\Sigma_{n+2}$ is a homotopy basis of $\Sigma^\top$, there is an $(n+2)$-cell from $\mathrm{CS}(f)$ to $\mathrm{CS}(g)$ obtained by compositions and inversions of $(n+2)$-cells of $\Sigma_{n+2}$ and identities $(n+2)$-cells.  By steps 1, 2 and 3, this allows us to construct an $(n+2)$-cell from $f$ to $g$ in $\kar{\Sigma_{n+2}}^\top$. \end{proof}

\subsubsection{Remark} The free $(n+1)$-category over the Karoubi envelope of an $(n+1)$-polygraph $\Sigma$ is not the Karoubi envelope of the free $(n+1)$-category $\Sigma^*$. Indeed, the only idempotents of $\Sigma^*$ are the identities $(n+1)$-cells. Thus, $\kar{\Sigma^*}$ is isomorphic to $\Sigma^*$ and not to $\kar{\Sigma}^*$. This implies a homotopy basis of $\kar{\Sigma^*}$ is not a homotopy basis of $\kar{\Sigma}^*$ in general.

\subsubsection{Example}\label{part2} Let $M$ be the monoid and  $\Sigma$ the 2-polygraph defined in example \ref{part1}. By Squier's Theorem \cite[Theorem 5.2]{Squier94}, a homotopy basis of the free $(2,1)$-category $\Sigma^\top$ is given by a 3-cell from $ab\alpha$ to $\alpha ba$. Thus, by \ref{result}, a homotopy basis of $\kar{\Sigma}^\ell$ is given by the following set of 3-cells:
$$\kar{\Sigma}_3=\{ ab\alpha \Rrightarrow \alpha ba,ab\alpha \Rrightarrow \alpha \pi_Y, \iota_X\alpha \Rrightarrow \alpha ba,\iota_X\alpha \Rrightarrow \alpha \pi_Y \}.$$
The $2$-polygraph $\kar{\Sigma}$ is Tietze equivalent to the convergent $2$-polygraph $\mathrm{Conv}$ defined by:
\begin{itemize}
\item $\mathrm{Conv}_0=\{ O,X,Y\}$,
\item $\mathrm{Conv}_1=\{ O \ofl{a} O,O \ofl{b} O,O \ofl{p_X} X,X \ofl{i_X} O,O \ofl{p_Y} Y,Y \ofl{i_Y} O\}$,
\item $\mathrm{Conv}_2=\{ aba \odfl{\alpha} a,p_Xi_X \odfl{\pi_X} ab,i_Xp_X \odfl{\iota_X} 1_X,p_Yi_Y \odfl{\pi_Y} ba,i_Yp_Y \odfl{\iota_Y} 1_Y, abp_X \odfl{} p_X, i_Xab \odfl{} i_X, bap_Y \odfl{} p_Y, i_Yba \odfl{} i_Y \}$.
\end{itemize}


\subsection{The linear case}

Let us recall the notions of $(n,p)$-linear categories introduced in \cite{All}.

\subsubsection{Linear $(n,p)$-categories}

A \emph{linear~$(n,0)$-category} is an internal $n$-category in the category $\mathbf{Mod}$ of modules over a given commutative ring. Let us assume linear~$(n,p)$-categories are defined for~$p \geqslant 0$. A linear~$(n+1,p+1)$-category is a data made of a set $\mathcal{C}_0$ and:
\begin{itemize}
\item for each $a$ and~$b$ in $\mathcal{C}_0$, a linear~$(n,p)$-category~$\mathcal{C}(a,b)$,
\item for each $a$ in $\mathcal{C}_0$, an identity morphism $i_a$ from the terminal $n$-category~$I_n$ to~$\mathcal{C}(a,a)$,
\item for each $a$,~$b$ and~$c$ in $\mathcal{C}_0$, a bilinear composition morphism $\star^{a,b,c}$ from $\mathcal{C}(a,b) \times \mathcal{C}(b,c)$ to~$\mathcal{C}(a,c)$.
\end{itemize}
such that:
\begin{itemize}
\item $\star^{a,c,d} \circ (\star^{a,b,c} \times id_{\mathcal{C}(c,d)})=\star^{a,b,d} \circ (id_{\mathcal{C}(a,b)} \times \star^{b,c,d})$,
\item $\star^{a,a,b} \circ (i_a \times id_{\mathcal{C}(a,b)}) \circ is_l=id_{\mathcal{C}(a,b)}=\star^{a,b,b} \circ (id_{\mathcal{C}(a,b)} \times i_q) \circ is_r$ where $is_l$ and~$is_r$ respectively denote the canonic isomorphisms from $\mathcal{C}(a,b)$ to~$I_n \times \mathcal{C}(a,b)$ and to~$\mathcal{C}(a,b) \times I_n$.
\end{itemize}

In particular, a linear $(n,n)$-category is a $n$-category $\mathcal{C}$ such that for each parallel $(n-1)$-cells $u$ and $v$ of $\mathcal{C}$, the set $\mathcal{C}_n(u,v)$ has a module structure over a ring making all compositions on $\mathcal{C}$ bilinear.

We call $\mathbf{LinCat}_{n,p}$ the category of linear $(n,p)$-categories. We also call $\mathbf{LinCat}_{n,p}^+$ the category of linear $(n,p)$-categories with a globular extension defined by the following pullback diagram:
\begin{gather*}
\begin{array}{c}
\tikz[scale=0.9]{
\node at (-0.5,0) {$\mathbf{LinCat}_{n,p}^+$};
\draw[color=black, ->] (0.5,0) -- (2,0);
\node at (2.9,0) {$\mathbf{Grph}_{n+1}$};
\draw[color=black, ->] (2.7,-0.5) -- (2.7,-2);
\node at (2.7,-2.5) {$\mathbf{Grph}_n$};
\draw[color=black, ->] (0,-0.5) -- (0,-2);
\node at (-0.5,-2.5) {$\mathbf{LinCat}_{n,p}$};
\draw[color=black, ->] (0.5,-2.5) -- (2,-2.5);
\node at (2.7,-2.5) {$\mathbf{Grph}_n$};
\draw[color=black] (0.3,-0.5) -- (0.55,-0.5) -- (0.55,-0.25);
\node at (1.25,-3) {$\mathcal{U}_{n,p}$};
\node at (3.2,-1.25) {$\mathcal{U}_n^G$};
} \end{array}
\end{gather*}
There is a forgetful functor from $\mathbf{LinCat}_{n,p}$ to the category $\mathbf{Cat}_n$ and this functor has a left adjoint. We can thus construct from an $n$-category a free $(n,p)$-linear category. The free linear $(n,p)$-category over an $n$-polygraph $\Sigma$ is the free linear $(n,p)$-category over the $n$-category $\Sigma^*$. We denote $\Sigma^\ell$ this linear $(n,p)$-category.

\subsubsection{Coherent presentation of a linear $(n,n)$-category}

We define the category $\mathbf{LinPol}_{n,p}$ of $(n,p)$-linear polygraphs and the functor~$\mathcal{F}_{n,p}$ from $\mathbf{LinPol}_{n,p}$ to~$\mathbf{LinCat}_{n,p}$ by induction on $n$ for $n \geqslant p$. $\mathbf{LinPol}_{n,n}$ is the category of $n$-polygraphs and the functor~$\mathcal{F}_{n,n}$ from $\mathbf{LinPol}_{n,n}$ to~$\mathbf{LinCat}_{n,n}$ is the free functor from $\mathbf{LinPol}_{n,n}$ to~$\mathbf{LinCat}_{n,n}$. Let us assume that the category~$\mathbf{LinPol}_{n,p}$ of linear~$(n,p)$-polygraphs and the functor~$\mathcal{F}_{n,p}$ from $\mathbf{LinPol}_{n,p}$ to~$\mathbf{LinCat}_{n,p}$ are defined. The category~$\mathbf{LinPol}_{n+1,p}$ is defined by the following pullback diagram:
\begin{gather*}
\begin{array}{c}
\tikz[scale=0.9]{
\node at (-0.5,0) {$\mathbf{LinPol}_{n+1,p}$};
\draw[color=black, ->] (0.7,0) -- (5,0);
\node at (5.9,0) {$\mathbf{Grph}_{n+1}$};
\draw[color=black, ->] (5.7,-0.5) -- (5.7,-2);
\node at (5.7,-2.5) {$\mathbf{Grph}_n$};
\draw[color=black, ->] (0,-0.5) -- (0,-2);
\node at (-0.5,-2.5) {$\mathbf{LinPol}_{n,p}$};
\draw[color=black, ->] (0.5,-2.5) -- (1.6,-2.5);
\node at (2.5,-2.5) {$\mathbf{LinCat}_{n,p}$};
\draw[color=black, ->] (3.4,-2.5) -- (5,-2.5);
\draw[color=black] (0.3,-0.5) -- (0.55,-0.5) -- (0.55,-0.25);
\node at (4,-3) {$\mathcal{U}_{n,p}$};
\node at (1,-3) {$\mathcal{F}_{n,p}$};
\node at (6.2,-1.25) {$\mathcal{U}_n^G$};
\node at (-0.5,-1.25) {$\mathcal{U}_{n,p}^P$};
\node at (2.85,0.5) {$\mathcal{U}_{n+1,p}^{GP}$};
} \end{array}
\end{gather*}
We denote by $\mathcal{F}_{n+1,p}^P$ the unique functor making the following diagram commutative:
\begin{gather*}
\begin{array}{c}
\tikz[scale=0.9]{
\node at (-0.6,0) {$\mathbf{LinCat}^+_{n,p}$};
\draw[color=black, ->] (0.5,0) -- (2,0);
\node at (2.9,0) {$\mathbf{Grph}_{n+1}$};
\draw[color=black, ->] (2.7,-0.5) -- (2.7,-2);
\node at (2.7,-2.5) {$\mathbf{Grph}_n$};
\draw[color=black, ->] (0,-0.5) -- (0,-2);
\node at (-0.6,-2.5) {$\mathbf{LinCat}_{n,p}$};
\draw[color=black, ->] (0.5,-2.5) -- (2,-2.5);
\node at (2.7,-2.5) {$\mathbf{Grph}_n$};
\draw[color=black] (0.3,-0.5) -- (0.55,-0.5) -- (0.55,-0.25);
\node at (1.25,-3) {$\mathcal{U}_{n,p}$};
\node at (3.2,-1.25) {$\mathcal{U}_n^G$};
\node at (-4,2) {$\mathbf{LinPol}_{n+1,p}$};
\draw[color=black, ->] (-2.6,1.5) -- (-0.6,0.4);
\draw[color=black, ->] (-4,1.5) -- (-4,-2);
\node at (-4,-2.5) {$\mathbf{LinPol}_{n,p}$};
\draw[color=black, ->] (-2.8,2) -- (2,0.4);
\node at (-2,0.6) {$\mathcal{F}_{n+1,p}^P$};
\draw[color=black, ->] (-3.1,-2.5) -- (-1.5,-2.5);
\node at (-2.4,-3) {$\mathcal{F}_{n,p}$};
\node at (-4.5,-0.25) {$\mathcal{U}_{n,p}^P$};
\node at (0.5,1.45) {$\mathcal{U}_{n+1,p}^{GP}$};
} \end{array}
\end{gather*}
The functor~$\mathcal{F}_{n+1,p}$ is defined as the following composite:
\begin{gather*}
\begin{array}{c}
\tikz[scale=0.9]{
\node at (-1,0) {$\mathbf{LinPol}_{n+1,p}$};
\draw[color=black, ->] (0.2,0) -- (2.2,0);
\node at (3.2,0) {$\mathbf{LinCat}_{n,p}^+$};
\draw[color=black, ->] (4,0) -- (6.2,0);
\node at (7.4,0) {$\mathbf{LinCat}_{n+1,p}$};
\node at (1.3,0.5) {$\mathcal{F}_{n+1,p}^P$};
\node at (5.1,0.5) {$\mathcal{F}_{n+1,p}^W$};
} \end{array}
\end{gather*}

A globular extension $\Gamma$ of the linear $(n,n)$-category~$\mathcal{C}$ is called a homotopy basis of~$\mathcal{C}$ if for any $n$-sphere $(f,g)$ of $\mathcal{C}$, the free linear $(n+1,n)$-category over~$(\mathcal{C},\Gamma)$ has an $(n+1)$-cell from $f$ to $g$. A coherent presentation of a linear $(n,n)$-category $\mathcal{C}$ is a linear $(n+2,n)$-polygraph $\Sigma$ such that the linear $(n+1,n)$-polygraph $\Sigma_{n+1}$ is a presentation of $\mathcal{C}$ and the set $\Sigma_{n+2}$ is a homotopy basis of $\Sigma_{n+1}^{\ell}$.

\subsubsection{Karoubi envelope of a linear $(n,n)$-category}

Let $\mathcal{C}$ be a linear $(n,n)$-category. In particular, $\mathcal{C}$ is an $n$-category. Let us denote by $\kar{\mathcal{C}}$ its Karoubi envelope. There is a structure of linear $(n,n)$-category on $\kar{\mathcal{C}}$ defined by $\alpha (e,\lambda f+g,e')=\lambda \alpha (e,f,e')+\alpha (e,g,e')$ for each scalar $\lambda$, each parallel $n$-cells $f$ and $g$ of $\mathcal{C}$ and each idempotents $e$ and $e'$ of $\mathcal{C}$ such that $f=e \star_{n-1} f \star_{n-1} e'$ and $g=e \star_{n-1} g \star_{n-1} e'$.

\subsubsection{Coherent presentation of the Karoubi envelope of a linear $(n,n)$-category}\label{reslin} Let $\mathcal{C}$ be a linear $(n,n)$-category and let $(\Sigma,\Sigma_{n+2})$ be a coherent presentation of $\mathcal{C}$. Let $\kar{\Sigma_{n+2}}$ be the globular extension of $\kar{\Sigma}$ defined as in \ref{result}. The $(n+2,n)$-polygraph $(\kar{\Sigma},\kar{\Sigma_{n+2}})$ is a coherent presentation of the Karoubi envelope of $\mathcal{C}$.

\begin{proof} To prove this proposition, we just have to prove that for each paralell $(n+1)$-cells $f$ and $g$ and each paralell $(n+1)$-cells $f'$ and $g'$ of $\kar{\Sigma}^\ell$ and each scalar $\lambda$ such that $\lambda f+f'$ and $\lambda g+g'$ are defined, we can construct an $(n+2)$-cell from $\lambda f+f'$ to $\lambda g+g'$ in $\kar{\Sigma_{n+2}}^\ell$ if there is an $(n+2)$-cell $A$ from $\mathrm{CS}(f)$ to $\mathrm{CS}(g)$ and an $(n+2)$-cell $A'$ from $\mathrm{CS}(f')$ to $\mathrm{CS}(g')$ in $\Sigma_{n+2}$. There is an $(n+2)$-cells $B$ from $f$ to $g$ in $\mathrm{CS}^{-1}(A)$ and is an $(n+2)$-cells $B'$ from $f'$ to $g'$ in $\mathrm{CS}^{-1}(A')$. Then, $\kar{\Sigma_{n+2}}^\ell$ contains the $(n+2)$-cell $\lambda B+B'$ from $\lambda f+f'$ to $\lambda g+g'$. This concludes the proof. \end{proof}

\subsubsection{Example}\label{algebra} The group $\mathbb{C}$-algebra $A$ of the symmetric group $\mathfrak{S}_2$ is a linear $(1,1)$-category presented by a linear $(2,1)$-polygraph $\Sigma$ defined by:
\begin{itemize}
\item $\Sigma_0$ has only one 0-cell,
\item $\Sigma_1$ has a 1-cell $s$,
\item $\Sigma_2$ has a 2-cell $\alpha$ from $ss$ to 1.
\end{itemize}
The linear $(2,2)$-category $A$ has three minimal idempotents: 0, $\frac{1-s}{2}$ and $\frac{1+s}{2}$. Thus, by \ref{pol}, the Karoubi envelope of $A$ is presented by the linear $(2,2)$-polygraph $\kar{\Sigma}$ defined by:
\begin{itemize}
\item $\kar{\Sigma}_0=\{ O,N,X,Y\}$,
\item $\kar{\Sigma}_1=\{ O \ofl{s} O,O \ofl{p_N} N,N \ofl{i_N} O,O \ofl{p_X} X,X \ofl{i_X} O,O \ofl{p_Y} Y,Y \ofl{i_Y} O\}$,
\item $\kar{\Sigma}_2=\{ ss \odfl{\alpha} s,p_Ni_N \odfl{\pi_N} 0,i_Np_N \ofl{\iota_N} 1_N,p_Xi_X \odfl{\pi_X} \frac{1-s}{2},i_Xp_X \odfl{\iota_X} 1_X,p_Yi_Y \odfl{\pi_Y} \frac{1+s}{2},i_Yp_Y \odfl{\iota_Y} 1_Y\}$.
\end{itemize}
A homotopy basis of the free linear $(2,2)$-category $\Sigma^\ell$ is given by a 3-cell from $s\alpha$ to $\alpha s$. Thus, by \ref{result}, a homotopy basis of $\kar{\Sigma}^\ell$ is given by the following set of 3-cells:
$$\kar{\Sigma}_3=\{ s\alpha \Rrightarrow \alpha s,(1-2\pi_X)\alpha \Rrightarrow \alpha s,(2\pi_Y)\alpha \Rrightarrow \alpha s,s\alpha \Rrightarrow \alpha (1-2\pi_X),(1-2\pi_X)\alpha \Rrightarrow \alpha (1-2\pi_X),$$
$$(2\pi_Y)\alpha \Rrightarrow \alpha (1-2\pi_X),s\alpha \Rrightarrow \alpha (2\pi_Y),(1-2\pi_X)\alpha \Rrightarrow \alpha (2\pi_Y),(2\pi_Y)\alpha \Rrightarrow \alpha (2\pi_Y)\}.$$
The linear $(2,2)$-polygraph $\kar{\Sigma}$ is Tietze equivalent to the convergent linear $(2,2)$-polygraph $\mathrm{Conv}$ defined by:
\begin{itemize}
\item $\mathrm{Conv}_0=\{ O,N,X,Y\}$,
\item $\mathrm{Conv}_1=\{ O \ofl{p_X} X,X \ofl{i_X} O,O \ofl{p_Y} Y,Y \ofl{i_Y} O\}$,
\item $\mathrm{Conv}_2=\{ 1_N \dfl 0,i_Xp_X \odfl{\iota_X} 1_X,p_Yi_Y \odfl{\pi_Y} 1-p_Xi_X,i_Yp_Y \odfl{\iota_Y} 1_Y,i_Xp_Y\dfl 0,i_Yp_X\dfl 0\}$.
\end{itemize}

\section{Categorification of algebras}

In this section, we define the Grothendieck decategorification of an $(n,n)$-category and the Grothendieck decategorification of a linear $(n+1,n)$-polygraph. We prove that the Grothendieck decategorification of a linear $(n+1,n)$-polygraph $\Sigma$ presents the Grothendieck decategorification of the linear $(n,n)$-category presented by $\Sigma$. We finally prove that the semi-convergence of the Grothendieck decategorification of a linear $(n+1,n)$-polygraph $\Sigma$ is equivalent to the uniqueness of decompositions as a direct sum of indecomposable $(n-1)$-cells in the linear $(n,n)$-category presented by $\Sigma$ up to isomorphism.

\subsection{Grothendieck decategorification}

The Grothendieck decategorification of a linear category $\mathcal{C}$ is the group generated by the isomorphism classes of $\mathcal{C}$ and subject to the relation $[a]=[b]+[c]$ whenever an object $a$ is direct sum of two objects $b$ and $c$. If the category $\mathcal{C}$ is monoidal, this case corresponding to a linear $(2,2)$-category the Grothendieck decategorification of $\mathcal{C}$ is also a ring with a product defined by $[a\otimes b]=[a][b]$ for any objects $a$ and $b$ of $\mathcal{C}$. In this section, we extend the definition of Grothendieck decategorification to arbitrary linear $(n,n)$-categories and give a construction for presenting such Grothendieck decategorifications.

\subsubsection{Direct sums in a linear $(n,n)$-category} Let $n>1$ be an integer and $\mathcal{C}$ be a linear $(n,n)$-category. Let $a$, $b$ and $c$ be $(n-1)$-cells of $\mathcal{C}$. We say that $a$ is \emph{direct sum} of $b$ and $c$ if there exist $n$-cells $a \ofl{p_b} b$, $a \ofl{p_c} c$, $b \ofl{i_b} a$ and $c \ofl{i_c} a$ such that:
\begin{itemize}
\item $p_b \star_{n-1} i_b+p_c \star_{n-1} i_c=1_a$,
\item $i_b \star_{n-1} p_b=1_b$,
\item $i_c \star_{n-1} p_c=1_c$.
\end{itemize}
In this case, we denote $a \simeq b \oplus c$.

\subsubsection{Grothendieck decategorification of a linear $(n,n)$-category} Let $n>1$ be an integer and $\mathcal{C}$ be a linear $(n,n)$-category. Two $(n-1)$-cells $u$ and $v$ of $\mathcal{C}$ are \emph{isomorphic} if there is an $n$-cell from $u$ to $v$ which is invertible for the $n$-composition. We will call $[u]$ the isomorphism class of the $(n-1)$-cell $u$. The \emph{Grothendieck decategorification} of $\mathcal{C}$ is the linear $\mathbb{Z}$-linear $(n-1,n-1)$-category $K(\mathcal{C})$ defined by:
\begin{itemize}
\item for $k<n-1$, the linear $\mathbb{Z}$-linear $(n-1,n-1)$-category $K(\mathcal{C})$ has the same $k$-cells than $\mathcal{C}$,
\item for any parallel $(n-2)$-cells $x$ and $y$ of $K(\mathcal{C})$, the $\mathbb{Z}$-module $K(\mathcal{C})_{n-1}[x,y]$ is the free abelian group generated by the isomorphisms classes of $\mathcal{C}_{n-1}[x,y]$ and subject to the relation $[a]=[b]+[c]$ for each $(n-1)$-cells $a$, $b$ and $c$ such that $a \simeq b \oplus c$
\item for any $0 \leqslant k \leqslant n-2$ and any $k$-composable $(n-1)$-cells $u$ and $v$ of $\mathcal{C}_{n-1}$, we have $[u] \star_k [v]=[u \star_k v]$.
\end{itemize}

\subsubsection{Example} Let $\mathcal{M}$ be a linear $(2,2)$-category with only one 0-cell. The Grothendieck decategorification $K(\mathcal{M})$ of $\mathcal{M}$ is an abelian group with a $\mathbb{Z}$-bilinear associative composition map $\star_0$. Thus, $K(\mathcal{M})$ is a ring.

\subsubsection{Remark} In general, given an $(n,n)$-category $\mathcal{C}$, the Grothendieck decategorifications $K(\mathcal{C})$ and $K(\kar{\mathcal{C}})$ are not isomorphic. For example, the $\mathbb{C}$-algebra $A$ from Example \ref{algebra} has a Grothendieck decategorification isomorphic to $\mathbb{Z}$ whereas the Grothendieck decategorification of $\kar{A}$ is isomorphic to $\mathbb{Z}^2$. If all idempotents of the $(n,n)$-category $\mathcal{C}$ are split, we have an isomorphism between $K(\mathcal{C})$ and $K(\kar{\mathcal{C}})$.

\subsubsection{Isomorphism proofs} Let $\Sigma$ be a linear $(n+1,n)$-polygraph. Let $u$ and $v$ be distinct $(n-1)$-cells of the free linear $(n+1,n)$-category $\Sigma^\ell$. An \emph{isomorphism proof} between $u$ and $v$ is a data $(\alpha_u,\alpha_v)$ made of two $(n+1)$-cells in $\Sigma^\ell$ such that there exist $n$-cells $u \ofl{a_u} v$ and $v \ofl{a_v} u$ verifying:
\begin{itemize}
\item $\alpha_u$ is an $(n+1)$-cell from $a_u \star_{n-1} a_v$ to $1_u$,
\item $\alpha_v$ is an $(n+1)$-cell from $a_v \star_{n-1} a_u$ to $1_v$.
\end{itemize}
An isomorphism proof $(\alpha_u,\alpha_v)$ is \emph{minimal} if there is no $(n-1)$-cell $w$, no integer $k<n-1$ and no isomorphism proof $(\alpha_{u'},\alpha_{v'})$ other than $(\alpha_u,\alpha_v)$  such that $(\alpha_u,\alpha_v)=(w \star_{n-1}\alpha_{u'},w \star_{n-1}\alpha_{v'})$ or $(\alpha_u,\alpha_v)=(\alpha_{u'}\star_{n-1} w,\alpha_{v'}\star_{n-1} w)$.

\subsubsection{Direct sum proofs} Let $\Sigma$ be a linear $(n+1,n)$-polygraph. Let $a$, $b$ and $c$ be $(n-1)$-cells of the free linear $(n+1,n)$-category $\Sigma^\ell$. A \emph{direct sum proof} of $a \simeq b \oplus c$ is a data $(\alpha_a,\alpha_b,\alpha_c)$ made of three $(n+1)$-cells in $\Sigma^\ell$ such that there exist $n$-cells $a \ofl{p_b} b$, $a \ofl{p_c} c$, $b \ofl{i_b} a$ and $c \ofl{i_c} a$ in $\Sigma^\ell$ verifying:
\begin{itemize}
\item $\alpha_a$ is an $(n+1)$-cell from $p_b \star_{n-1} i_b+p_c \star_{n-1} i_c$ to $1_a$,
\item $\alpha_b$ is an $(n+1)$-cell from $i_b \star_{n-1} p_b$ to $1_b$,
\item $\alpha_c$ is an $(n+1)$-cell from $i_c \star_{n-1} p_c$ to $1_c$.
\end{itemize}
A direct sum proof $(\alpha_a,\alpha_b,\alpha_c)$ is said to be \emph{minimal} if there are no $(n-1)$-cell $u$, no integer $k<n-1$ and no direct sum proof $(\alpha_{a'},\alpha_{b'},\alpha_{c'})$ other than $(\alpha_a,\alpha_b,\alpha_c)$ such that $(\alpha_a,\alpha_b,\alpha_c)=(u \star_{n-1}\alpha_{a'},u\star_{n-1} \alpha_{b'},u\star_{n-1} \alpha_{c'})$ or $(\alpha_a,\alpha_b,\alpha_c)=(\alpha_{a'}\star_{n-1} u,\alpha_{b'}\star_{n-1} u,\alpha_{c'}\star_{n-1} u)$.

\subsubsection{Grothendieck decategorification of a linear $(n+1,n)$-polygraph} Let $\mathcal{C}$ be a linear $(n,n)$-category presented by a linear $(n+1,n)$-polygraph $\Sigma$. The \emph{Grothendieck decategorification} of $\Sigma$ is the linear $\mathbb{Z}$-linear $(n,n-1)$-polygraph $K(\Sigma)$ defined by:
\begin{itemize}
\item for $k \leqslant n-1$, the linear $\mathbb{Z}$-linear $(n,n-1)$-polygraph $K(\Sigma)$ has the same $k$-cells than $\Sigma$,
\item for each $(n-1)$-cells $u$ and $v$ of $\Sigma_{n-1}^\ell$ such that $u \neq v$ and there is a minimal isomorphism proof between $u$ and $v$, there is an $n$-cell in $K(\Sigma)$ from $u$ to $v$.
\item for each $(n-1)$-cells $a$, $b$ and $c$ of $\Sigma_{n-1}^\ell$ such that there is a minimal direct sum proof of $a \simeq b \oplus c$, there is an $n$-cell in $K(\Sigma)$ from $a$ to $b+c$.
\end{itemize}

\subsubsection{Theorem}\label{presentation} Let $\mathcal{C}$ be a linear $(n,n)$-category presented by a linear $(n+1,n)$-polygraph $\Sigma$. The Grothendieck decategorification $K(\Sigma)$ of $\Sigma$ presents the Grothendieck decategorification $K(\mathcal{C})$ of $\mathcal{C}$.

\begin{proof} By definition, the $(n-1,n-1)$-category presented by $K(\Sigma)$ has the same $k$-cells than $K(\mathcal{C})$ for $k<n-1$. This linear $(n-1,n-1)$-category is also generated by the same $(n-1)$-cells than $K(\mathcal{C})$. Each relation verified by the $(n-1)$-cells of the $(n-1,n-1)$-category presented by $K(\Sigma)$ is also verified by the $(n-1)$-cells of $K(\mathcal{C})$. Let us now prove that each relation verified by the $(n-1)$-cells of $K(\mathcal{C})$ is verified by the $(n-1)$-cells of the $(n-1,n-1)$-category presented by $K(\Sigma)$.

Let $a\simeq b \oplus c$ be a direct sum in $\mathcal{C}$. If there is a minimal proof of this direct sum, then there is an $n$-cell in $K(\Sigma)_n^\ell$ from $[a]$ to $[b]+[c]$. Else, there are decompositions:
\begin{itemize}
\item $a=u_1 \star_{n-2} (u_2 \star_{n-3}( \cdots (u_{n-1} \star_0 a' \star_0 u_n) \cdots )\star_{n-3} u_{2n-3}) \star_{n-2} u_{2n-2}$,
\item $b=u_1 \star_{n-2} (u_2 \star_{n-3}( \cdots (u_{n-1} \star_0 b' \star_0 u_n) \cdots )\star_{n-3} u_{2n-3}) \star_{n-2} u_{2n-2}$,
\item $c=u_1 \star_{n-2} (u_2 \star_{n-3}( \cdots (u_{n-1} \star_0 c' \star_0 u_n) \cdots )\star_{n-3} u_{2n-3}) \star_{n-2} u_{2n-2}$
\end{itemize}
where all $u_i$ are $(n-1)$-cells of $\mathcal{C}$ and the direct sum $a'\simeq b' \oplus c'$ has a minimal proof. Hence, by distributivity of the compositions there is an $n$-cell in $K(\Sigma)_n^\ell$ from $[a]$ to $[b]+[c]$. This concludes the proof. \end{proof}

\subsubsection{Example}\label{notKS} Let $\Sigma$ be the linear $(2,1)$-polygraph defined by:
\begin{itemize}
\item $\Sigma_0$ has five 0-cells $O$, $X_1$, $X_2$, $Y_1$ and $Y_2$, 
\item $\Sigma_1$ has the 1-cells $O \ofl{a} O$, $O \ofl{b} O$, $O \ofl{p_{X_1}} X_1$, $O \ofl{p_{X_2}} X_2$, $O \ofl{p_{Y_1}} X_1$, $O \ofl{p_{Y_2}} X_2$, $X_1 \ofl{i_{X_1}} O$, $X_2 \ofl{i_{X_2}} O$, $Y_1 \ofl{i_{Y_1}} O$ and $Y_2 \ofl{i_{Y_2}} O$,
\item $\Sigma_2$ has the 2-cells $aba \odfl{\alpha} a$, $p_{X_1}i_{X_1} \odfl{\pi_{X_1}} ab$, $p_{X_2}i_{X_2} \odfl{\pi_{X_2}} 1_O-ab$, $i_{X_1}p_{X_1} \odfl{\iota_{X_1}} 1_{X_1}$, $i_{X_2}p_{X_2} \odfl{\iota_{X_2}} 1_{X_2}$, $p_{Y_1}i_{Y_1} \odfl{\pi_{Y_1}} ba$, $p_{Y_2}i_{Y_2} \odfl{\pi_{Y_2}} 1_Oba$, $i_{Y_1}p_{Y_1} \odfl{\iota_{Y_1}} 1_{Y_1}$ and $i_{Y_2}p_{Y_2} \odfl{\iota_{Y_2}} 1_{Y_2}$.
\end{itemize}
Let $\mathcal{C}$ be the linear $(1,1)$-category presented by $\Sigma$. The Grothendieck decategorification $K(\Sigma)$ of $(2,1)$-polygraph $\Sigma$ has five 0-cells $[O]$, $[X_1]$, $[X_2]$, $[Y_1]$ and $[Y_2]$. Two direct sums in $\mathcal{C}$ have a minimal proof: $O \simeq X_1 \oplus X_2$ with the proof $(\pi_{X_1}+\pi_{X_2},\iota_{X_1},\iota_{X_2})$ and $O \simeq Y_1 \oplus Y_2$ with the proof $(\pi_{Y_1}+\pi_{Y_2},\iota_{Y_1},\iota_{Y_2})$. Then $K(\Sigma)$ has the 1-cells $[O] \ofl{}[X_1] + [X_2]$ and $[O] \ofl{}[Y_1] + [Y_2]$. Because $\Sigma$ does not have any isomorphism proof, the Grothendieck decategorification $K(\mathcal{C})$ of $\mathcal{C}$ is the free abelian group over three elements.

\subsection{Krull-Schmidt linear $(n,n)$-categories}

A category $\mathcal{C}$ with direct sums is said to be Krull-Schmidt if any object of $\mathcal{C}$ can be uniquely decomposed as a direct sum of indecomposable objects and those indecomposable objects have local endomorphisms rings. We extend this notion to linear $(n,n)$-categories and give a criterion to decide if a linear $(n,n)$-category verifies the first part of the Krull-Schmidt property given a presentation of this linear $(n,n)$-category.

\subsubsection{Krull-Schmidt linear $(n,n)$-categories} Let $\mathcal{C}$ be a linear $(n,n)$-category. An \emph{indecomposable} of $\mathcal{C}$ is an $(n-1)$-cells of  $\mathcal{C}$ without non trivial decomposition into a direct sum. We say that $\mathcal{C}$ is \emph{Krull-Schmidt} if all $(n-1)$-cell of $\mathcal{C}$ can be decomposed into a unique direct sums of indecomposable $(n-1)$-cells of $\mathcal{C}$ up to isomorphism and the ring $(\mathcal{C}_n(e),+,\star_{n-1})$ is local for each $(n-1)$-cell $e$ in this decomposition.

\subsubsection{Example} Let $\mathcal{C}$ be the linear $(1,1)$-category presented by the linear $(2,1)$-polygraph $\Sigma$ of Example \ref{notKS}. This $(1,1)$-category is not Krull-Schmidt.

\subsubsection{Rewriting steps} Let $\Sigma$ be a linear linear $(n,n-1)$-polygraph. A \emph{rewriting step} of $\Sigma$ is an $n$-cell of $\Sigma^\ell$ of the form:
$$1_{u_1} \star_{n-1} \cdots  (1_{u_{n-1}} \star_0 \lambda\alpha \star_0 1_{u_{n}}) \cdots  \star_n 1_{u_{2n}}+v$$
where $\alpha$ is in $\Sigma_n$, the $u_i$ are monomials of $\Sigma$, $\lambda$ is a nonzero scalar and $v$ is an $(n-1)$-cell of $\Sigma^\ell$ such that $1_{u_1} \star_{n-1} \cdots  (1_{u_{n-1}} \star_0 s_{n-1}(\alpha) \star_0 1_{u_{n}}) \cdots  \star_n 1_{u_{2n}}$ does not appear in the monomial decomposition of $v$. A \emph{rewriting sequence} of $\Sigma$ is a finite or infinite sequence $f_0\cdot f_1 \dot \ldots \dot f_i \dot\, \cdots$, where the $f_i$ are rewriting steps such that $t_1(f_i)=s_1(f_{i+1})$ for all $i\geq 0$. An $(n-1)$-cell $u$ of $\Sigma^\ell$ \emph{rewrites} into an $(n-1)$-cell $v$ of $\Sigma^\ell$ if there is a rewriting sequence from $u$ to $v$.

\subsubsection{Confluence} A \emph{branching} of $\Sigma$ is a pair of rewriting sequences of $\Sigma$ with the same source. A finite branching $(\alpha ,\beta)$ is \emph{confluent} if it there exist two rewriting sequences of $\Sigma$ respectively of source $t_{n-1}(\alpha)$ and $t_{n-1}(\beta)$ with the same target. We say that $\Sigma$ is confluent if all finite branchings of $\Sigma$ are confluent.

\subsubsection{Normal forms} Let $u$ be an $(n-1)$-cell of $\Sigma^\ell$. A \emph{normal form} of $u$ is an $(n-1)$-cell $v$ of $\Sigma^\ell$ such that $u$ rewrites into $v$ and $v$ cannot be rewritten. A \emph{quasi-normal form} of $u$ is an $(n-1)$-cell $v$ of $\Sigma^\ell$ such that $u$ rewrites into $v$ and for each $v'$ such that $v$ rewrites into $v'$, we have $v'$ rewrites into $v$.

\subsubsection{Termination and convergence} We say that $\Sigma$ is \emph{terminating} if it has no infinite rewriting sequence, that is there is no sequence $(u_k)_{k \in \mathbb{N}}$ of $(n-1)$-cells such that for each $i$ in $\mathbb{N}$, there is a rewriting step from $u_i$ to $u_{i+1}$. We say that $\Sigma$ is \emph{quasi-terminating} \cite{Der} if each sequence $(u_k)_{k \in \mathbb{N}}$ of $(n-1)$-cells such that for each $i$ in $\mathbb{N}$, there is a rewriting step from $u_i$ to $u_{i+1}$, contains an infinite occurrence of the same $(n-1)$-cell. We say that $\Sigma$ is \emph{(quasi-)convergent} if it is (quasi-)terminating and confluent.

\subsubsection{Remark} If $\Sigma$ is convergent, each $(n-1)$-cell of $\Sigma^\ell$ has a unique normal form. If $\Sigma$ is quasi-convergent, each $(n-1)$-cell of $\Sigma^\ell$ has at least one quasi-normal form and all those quasi-normal forms rewrite into each other.

\subsubsection{Theorem}\label{thKS} Let $\mathcal{C}$ be a linear $(n,n)$-category presented by a linear $(n+1,n)$-polygraph $\Sigma$. Then, each $(n-1)$-cell of $\mathcal{C}$ has a unique decomposition into a direct sum of indecomposable $(n-1)$-cells up to isomorphism if and only if the Grothendieck decategorification $K(\Sigma)$ of $\Sigma$ is quasi-convergent.

\begin{proof} The first implication is obvious. We assume now $K(\Sigma)$ is quasi-convergent. Let $f$ be a rewriting step of $K(\Sigma)$ such that $t_{n-1}(f)$ does not rewrite into $s_{n-1}(f)$. Then $t_{n-1}(f)$ is the sum of two $(n-1)$-cells of $\Sigma^\ell$. Thus, any $(n-1)$-cell of $K(\Sigma)_{n-1}^*=\Sigma_{n-1}^\ell$ has a quasi-normal form of the form $u_1+ \cdots u_k$ where each $u_i$ is a monomial of $K(\Sigma)$ which cannot be rewritten into a sum of two monomials. This concludes the proof. \end{proof}

\begin{small}
\renewcommand{\refname}{\Large\textsc{References}}
\bibliographystyle{alpha}
\bibliography{Biblio}
\end{small}

\end{document}